\theoremstyle{plain}
\newtheorem{theorem}{Theorem}
\newtheorem{lemma}[theorem]{Lemma}
\newtheorem{claim}[theorem]{Claim}
\theoremstyle{definition}
\newtheorem{definition}[theorem]{Definition}
\theoremstyle{remark}
\title{Pancyclicity when each cycle contains $k$ chords}
\author{Vladislav Taranchuk\thanks{Department of Statistics and Mathematics, California State University Sacramento, Sacramento, CA, 95819, vtaranchuk@csus.edu. This research was funded by SURE Grant by the College of Natural Sciences and Mathematics from the California State University Sacramento.}}
\begin{document}

\maketitle 

\begin{abstract}
For integers $n \geq k \geq 2$, let $c(n,k)$ be the minimum number of chords that must be added to a cycle of length $n$ so that 
the resulting graph has the property that for every $l \in \{ k , k + 1 , \dots , n \}$, there is a cycle of length $l$ that contains 
exactly $k$ of the added chords.  
Affif Chaouche, Rutherford, and Whitty introduced the function $c(n,k)$. They showed that for every integer $k \geq 2$, 
$c(n , k ) \geq \Omega_k ( n^{1/k} )$ and they asked if $n^{1/k}$ gives the correct order of magnitude of $c(n, k)$ for $k \geq 2$. Our main theorem answers this question as we prove that for every integer $k \geq 2$, and for sufficiently large $n$, $c(n , k) \leq k \lceil n^{1/k} \rceil + k^2$.
This upper bound, together with the lower bound of Affif Chaouche et.\ al., shows that the order of magnitude of 
$c(n,k)$ is $n^{1/k}$.
\end{abstract}

\section{Introduction}

An $n$-vertex graph is said to be \textbf{pancyclic} if it contains a cycle of length $l$ for each $l \in \{ 3, 4, 5, \dots , n \}$. There is a large amount of research on pancyclic graphs including many papers on conditions which imply pancyclicty, as well as investigations into properties of pancyclic graphs. For more on pancyclic graphs, we refer the reader to the recent book of George, Khodkar, and Wallis \cite{book}. Our focus will be on an extremal function that has its roots in a function of Bondy. Let $m(n)$ be the minimum number of edges in a pancyclic graph on $n$ vertices. Bondy \cite{b} introduced the function $m(n)$ and stated, without proof, that
\begin{equation}\label{Bon}
n-1 + \log_2(n-1) \leq m(n) \leq n + \log_2 n + H(n) + O(1)
\end{equation}
where $H(n)$ is the smallest integer such that $(\log_2)^{H(n)}(n) < 2$. To our knowledge, the bounds given in (\ref{Bon}) have not been improved. Nevertheless, Bondy's problem is quite natural and has inspired several new extremal functions. Let us take a moment to introduce one such example before we define the extremal function that we will focus on.

Broersma \cite{br} asked for the minimum number of edges in a vertex pancyclic graph on $n$ vertices. Recall an $n$-vertex graph is \textbf{vertex pancyclic} if every vertex lies in a cycle of length $l$ for every $l \in \{ 3, 4, 5, \dots , n \}$. Broersma proved that 
\begin{equation}\label{bro}
\frac{3}{2}n < vp(n) \leq \lfloor \frac{5}{3}n \rfloor
\end{equation}
for all $n \geq 7$ where $vp(n)$ is the minimum number of edges in a vertex pancyclic graph with $n$ vertices. This shows that $vp(n)$ is linear in $n$, but an asymptotic formula for $vp(n)$ is not known. Improving either (\ref{Bon}) or (\ref{bro}) would be quite interesting.

A consequence of (\ref{Bon}) is that if $m'(n)$ is the minimum number of chords that must be added to $C_n$ to obtain a pancyclic graph, then
\[
m'(n) = \log_2 n + o(\log_2 n).
\] 
Similarly, (\ref{bro}) implies that the minimum number of chords that must be added to $C_n$ to obtain a vertex pancyclic graph is $\Theta(n)$. Motivated by these results, Affif Chaouche, Rutherford, and Whitty \cite{crw} introduced the following extremal function.
For integers $n \geq 6 $ and $k \geq 2$, let $c(n,k)$ be the minimum number of chords that must be added to a cycle 
of length $n$ so that the resulting graph has the property that for 
every integer $l \in \{k , k + 1 , \dots , n \}$, there is a cycle of length $l$ that contains exactly $k$ of the added 
chords. By (\ref{Bon}), we immediately get $c(n, k) \geq \log_2(n-1)$ for all $k \geq 2$. The following result of Affif Chaouche et.\ al.\ (see corollary 8 in \cite{crw}) improves this lower bound and shows that requiring each cycle to contain exactly $k$ of the added chords has a rather dramatic effect on the amount of chords that must be added.

\begin{theorem}[Affif Chaouche, Rutherford, Whitty \cite{crw}]\label{thm crw}
Let $k \geq 1$ be an integer.  For any integer $n \geq 6$
\[
c(n, k ) \geq \Omega( n^{1/k}).
\]
\end{theorem}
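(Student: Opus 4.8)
The plan is to bound the number of cycles that use exactly $k$ of the added chords and compare this against the number $n-k+1$ of lengths that must be realized. Write $t = c(n,k)$ for the number of added chords and let $G$ be the graph obtained from $C_n$ by adding them. First I would record the structure of any cycle $\gamma$ in $G$ that uses exactly $k$ chords: deleting the $k$ chord-edges from $\gamma$ leaves exactly $k$ vertex-disjoint paths, each of which is a sub-path (arc) of the original cycle $C_n$ (some arcs possibly trivial, i.e.\ a single vertex). Hence $\gamma$ decomposes into $k$ chords together with $k$ arcs of $C_n$ arranged alternately around $\gamma$, and its length is
\[
|\gamma| = k + \sum_{i=1}^{k} a_i,
\]
where $a_1, \dots, a_k \ge 0$ are the lengths of the arcs.

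Next I would bound how many such cycles can exist. A cycle using exactly $k$ chords is specified by (i) the set of $k$ chords it uses, (ii) the way the $2k$ chord-endpoints are joined in pairs by the connecting arcs, and (iii) for each such pair, which of the two arcs of $C_n$ between them is traversed. There are at most $\binom{t}{k}$ choices in (i), at most $(2k-1)!!$ choices for the matching in (ii), and at most $2^{k}$ choices in (iii). Consequently the number of cycles of $G$ using exactly $k$ chords is at most $\binom{t}{k}(2k-1)!!\,2^{k} \le C_k\, t^{k}$ for a constant $C_k$ depending only on $k$, and in particular at most $C_k t^{k}$ distinct lengths arise this way.

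Finally I would combine the two bounds. Since $G$ realizes a cycle with exactly $k$ chords of every length $l \in \{k, k+1, \dots, n\}$, the number of distinct such lengths is $n - k + 1$, so $C_k t^{k} \ge n - k + 1$. For $n$ large this gives $t^{k} \ge n/(2C_k)$, whence $t \ge \bigl(n/(2C_k)\bigr)^{1/k} = \Omega(n^{1/k})$, with the implied constant depending only on $k$, as claimed.

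I expect the main obstacle to be making steps (ii)--(iii) rigorous: verifying that deleting the $k$ chords always yields exactly $k$ arcs, correctly handling arcs that degenerate to a single vertex and chords that happen to share an endpoint, and checking that every cycle through a fixed $k$-set of chords is accounted for by some matching-plus-direction datum. Once this per-chord-set bound of $O_k(1)$ is established, the global count $O_k(t^{k})$ and the final inequality are routine.
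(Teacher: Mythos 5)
This theorem is quoted by the paper from Affif Chaouche, Rutherford, and Whitty \cite{crw} (their Corollary 8) and is not proved in the paper itself, so there is no in-paper proof to compare against; your counting argument is, however, correct and is essentially the standard route to this bound: every cycle through exactly $k$ chords is determined by its chord set, a matching of the $2k$ chord-endpoints, and a choice of arc for each matched pair, so at most $O_k(t^k)$ cycle lengths can be realized, forcing $t=\Omega_k(n^{1/k})$. The only caveat is that your extra factors of $(2k-1)!!\,2^k$ weaken the implied constant relative to the $\binom{t}{k}\ge$-type bound the paper alludes to when it says the lower bound is asymptotic to $\tfrac{k}{e}n^{1/k}$, but this does not affect the $\Omega(n^{1/k})$ statement being proved.
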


In \cite{crw}, it is suggested that $c(n, k)$ lies somewhere between $m'(n) = \Theta (\log n)$ and $vp (n) = \Theta(n)$. It was left as an unsolved problem to prove a non-trivial upper bound on $c(n, k)$. Our main result solves this problem completely, and together with Theorem \ref{thm crw}, shows that the order of magnitude of $c(n, k)$ is $n^{1/k}$. This result comes in the form of a construction that uses the representation of cycle lengths in a specific base dependent on $n$ and $k$. 

\begin{theorem}\label{Main Theorem}
Let $k \geq 2$ be an integer. If $ n\geq (k+2)^k,$ then
$$c(n, k) \leq k \lceil n^{1/k} \rceil + k^2. $$
\end{theorem}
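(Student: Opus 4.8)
The plan is to prove the bound by an explicit construction, organized around writing cycle lengths in base $b := \lceil n^{1/k}\rceil$. Since $b^k \ge n$, every integer in $\{0,1,\dots,n-k\}$ admits a representation $\sum_{i=0}^{k-1} c_i b^i$ with digits $c_i \in \{0,1,\dots,b-1\}$. Label the vertices of the cycle $0,1,\dots,n-1$ in cyclic order. The guiding bookkeeping is the following: if a cycle $H$ traverses $C_n$ but replaces $k$ pairwise disjoint arcs of sizes $a_1,\dots,a_k$ (an arc of size $a_j$ being a path of $a_j$ consecutive cycle-edges) each by a single added chord, then $H$ uses exactly $k$ chords and has length $n-\sum_j a_j + k$. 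Hence producing a cycle of length $l$ is equivalent to selecting $k$ disjoint arcs whose sizes sum to $n-l+k$ and shortcutting each. I would make the arc chosen ``at level $j$'' carry the weight $c_j b^j$ of the $j$-th digit, so that the digit string of $n-l$ (suitably shifted) directly prescribes which arc to shortcut in each level.

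To realize this digit-by-digit, I would partition $C_n$ into $k$ consecutive blocks $B_0,\dots,B_{k-1}$, with $B_i$ chosen long enough (roughly $b^{i+1}$) to contain an arc of every length in $\{c\,b^i + O(1) : 0\le c\le b-1\}$. Within $B_i$ I place a family of about $b$ chords, the $c$-th of which spans an arc of size $\approx c\,b^i$; selecting one chord from each block then shortcuts a total arc length equal to the chosen digit string, and the retained arcs together with the $k$ chosen chords close up into a single cycle. Writing $n-l$ in base $b$ thus selects, for each target $l$, exactly one chord per block, yielding a cycle of the prescribed length that uses exactly $k$ chords. The number of chords placed is about $kb$, and I would add $O(k^2)$ connector chords to join the blocks into one cycle and to handle the extreme lengths; this matches the claimed bound $k\lceil n^{1/k}\rceil + k^2$.

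The hardest part is \emph{simultaneous} realizability: one fixed chord set must serve every $l$ at once, so for each digit vector the chosen arcs must be disjoint and must stitch together—alternating with the retained arcs and the connector chords—into a single cycle using exactly $k$ chords, never fewer and never more. Two boundary phenomena are where I expect the real work, and where the hypothesis $n\ge (k+2)^k$ and the $+k^2$ slack are spent. First, a digit $c_i=0$ must not amount to omitting a level, since every level is required to contribute a genuine chord (an arc of size $\ge 2$); this forces a shift of the encoding with a compensating correction, consuming a few chords per level. Second, the two extreme bands—lengths near $n$ (where the shortcut arcs nearly vanish, so a nearly Hamiltonian traversal through $k$ chords is needed) and lengths near $k$ (where the arcs nearly exhaust $C_n$ and the $k$ chords must by themselves form the short cycle)—fall outside the clean shortcut regime and need dedicated connector chords. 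Verifying disjointness and the exact single-cycle, exactly-$k$ property uniformly over all digit vectors, while keeping the block lengths summing to at most $n$, is the technical heart; the base-$b$ length accounting itself should be routine once the placement and these boundary cases are fixed.
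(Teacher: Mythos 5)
Your proposal is essentially the paper's construction: base-$b$ digits with $b=\lceil n^{1/k}\rceil$, $k$ consecutive blocks of geometrically growing length $\approx b^{e+1}$, a fan of $b$ chords per block whose choice encodes the digit $c_e$ in the length of the arc it shortcuts, one chord used per block per target length, and $O(k^2)$ extra chords for the lengths near $n$. The boundary issues you defer as the ``technical heart''---a zero digit still contributing a genuine chord (handled by a $+1$ shift per level), truncating the last block so the total fits inside $n$ (handled via the wrap-around edge $\{n,1\}$ and the parameter the paper calls $\alpha$), and checking that the two resulting ranges of achievable cycle lengths overlap---are exactly the verifications the paper carries out, and they go through as you anticipate.
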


Using Stirling's approximation, a close look at the proof of Theorem \ref{thm crw}, shows that the lower bound $\Omega( n^{1/k})$ is asymptotic to $\frac{k}{e}n^{1/k}$ as $k$ goes to infinity. Therefore, our Theorem \ref{Main Theorem} is best possible up to a constant factor. In fact, we believe that for $k \geq 2$, $c(n, k) = kn^{1/k} + o(n^{1/k})$ and Theorem \ref{Main Theorem} is asymptotically best possible. 

In the next section we prove Theorem \ref{Main Theorem}. Some concluding remarks are made in Section 3. Most of the notation that we use follows that of West \cite{west}.  
In particular, $C_n$ always denotes a cycle of length $n$ with vertex set $\{1,2, \dots , n \}$ whose edges are $\{ i , i + 1\}$, $i \in \{1,2, \dots , n \}$ together with $\{ n , 1 \}$.   

\section{Proof of Theorem \ref{Main Theorem}}

Let $k \geq 2$ be an integer and let $ n\geq (k+2)^k$. In order to prove that 
\[
c(n, k) \leq k \lceil n^{1/k} \rceil +k^2,
\]
we must show how to add 
at most $k \lceil n^{1/k} \rceil + k^2$ chords to $C_n$ so that in the resulting graph, 
for each $l \in \{ k, k+1,\dots , n \}$, there is a cycle of length $l$ that contains exactly $k$ chord edges.
The construction is best described in several steps.  

\noindent
\underline{Step 1}: Define the graphs which will serve as the building blocks for our construction.
These are the graphs $G_b (i,e)$, defined in Definition 1, and the base chords of 
$G_b (i,e)$ will be chord edges in the final construction.

\medskip
\noindent
\underline{Step 2}: Prove that each $G_b (i,e)$ has paths of certain lengths between two specific vertices (see Lemma \ref{Base graph}).

\medskip
\noindent
\underline{Step 3}: Form the graph $H_b (k)$ (see Definition 2) which is 
obtained by taking the union of a certain collection of $G_b (i,e)$'s.  

\medskip
\noindent
\underline{Step 4}: Show that $H_b (k)$ contains paths of certain 
lengths between two specific vertices (see Lemma \ref{long path}).  Each of these paths 
will contain exactly $k-1$ edges that will end up being chords in the final construction.  Furthermore, 
these paths are obtained by taking the union of paths whose existence is established in Step 2.  

\medskip
\noindent
\underline{Step 5}: Add a few more edges to $H_b (k)$ (see Definition 3) to obtain a 
graph that has a cycle of length $l$ for 
all $l \in \{ k, k+1 , \dots , n - k \}$ that contains exactly $k$ chords 

\medskip
\noindent
\underline{Step 6}: Finish the construction by adding a small number of chords to account 
for the needed cycles of length $n - k +1 , n-k + 2, \dots , n $.  

\bigskip
Having given a brief outline, let us proceed to the details.   

\begin{definition}[\textbf{1}]
Let $b \geq 3$, $i \geq 1,$ and $e \geq 0$ be integers. Let $G_b(i, e)$ be the graph with vertex set 
$$\{i, i+1, i+2,\dots , i+2+b^{e+1}-b^e \}$$ and edge set 
$$\{ \{j, j+1 \} :i \leq j \leq i+1+b^{e+1}-b^e \} \cup \{ \{ i, i+2+jb^e \}: 0 \leq j \leq b-1 \} .$$  
The vertex $i$ is called the \textbf{base point} of $G_b(i,e)$.
Edges in the set 
$$\{ \{ i, i+2+jb^e \}: 0 \leq j \leq b-1 \}$$
are called \textbf{base chords}.
Edges of the form $\{j , j + 1 \}$ are called \textbf{outer edges}.
Figure 1, given below, shows three different $G_b (i,e)$'s.    
\end{definition}

Observe that in $G_b(i, e)$ there are exactly
$ b^{e+1}-b^e+2 $ outer edges and exactly $b$ base chords. The vertices of the form $b^x +2x$ play an important role in our construction, so we define 
$$
q(x) = b^x + 2x.
$$ 
 with $x \in \{ 0, 1, \dots, k \}$.
\begin{lemma}\label{Base graph}
For each $c_e \in \{ 0, 1, \dots , b-1 \}$, the graph $G_b(q(e), e)$ contains a path $P$ of length $1+c_eb^e$ from 
the vertex $q(e)$ to the vertex $q(e+1)$ such that $P$  contains exactly one base chord.
\end{lemma}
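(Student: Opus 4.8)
The plan is to exhibit the required path explicitly, since the graph $G_b(q(e),e)$ is simple enough that a single well-chosen base chord followed by a walk along the outer edges does the job. First I would pin down the two endpoints in terms of the vertex set. Setting $i = q(e) = b^e + 2e$, the base point of $G_b(q(e),e)$ is $q(e)$ itself, and the largest vertex of the graph is $i + 2 + b^{e+1} - b^e = b^{e+1} + 2e + 2 = q(e+1)$. Thus $q(e+1)$ is precisely the terminal vertex of the graph. This identification is the one genuinely load-bearing computation, so I would record it at the outset.

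Next, I would recall the two families of edges. The base chords join $i$ to the vertices $i + 2 + jb^e$ for $j \in \{0, 1, \dots, b-1\}$, and the outer edges form a path through the consecutive vertices from $i$ up through $q(e+1)$. Given $c_e \in \{0,1,\dots,b-1\}$, I would select the single base chord indexed by $j = b - 1 - c_e$. Because $c_e$ ranges over $\{0,\dots,b-1\}$, this index $j$ lies in $\{0,\dots,b-1\}$ as well, so the chosen chord genuinely exists in $G_b(q(e),e)$. The path $P$ then consists of this base chord from $i$ to $i + 2 + jb^e$, followed by the outer edges running forward from $i + 2 + jb^e$ to the final vertex $q(e+1) = i + 2 + (b-1)b^e$.

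It remains to count the length and verify simplicity. The outer portion uses $(b-1)b^e - jb^e = (b-1-j)b^e = c_e b^e$ edges, so the total length of $P$ is $1 + c_e b^e$, exactly as required, and $P$ contains exactly one base chord by construction. The path $P$ is simple: after leaving $i$ along the chord it lands on $i + 2 + jb^e > i$ and thereafter moves only forward through strictly increasing vertex labels, so no vertex is repeated. Since the whole argument reduces to the indexing identity $j = b - 1 - c_e$ together with the endpoint computation $q(e+1) = i + 2 + b^{e+1} - b^e$, there is no real obstacle; the only point demanding care is confirming that $q(e+1)$ is indeed the last vertex of $G_b(q(e),e)$, which is precisely the reason the shift $q(x) = b^x + 2x$ was built into the definition.
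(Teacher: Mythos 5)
Your proof is correct and is essentially identical to the paper's: choosing the base chord with index $j = b-1-c_e$ lands you on the vertex $i + 2 + (b-1-c_e)b^e = q(e+1) - c_e b^e$, which is exactly the vertex the paper calls $x$, and the remainder of the path along the outer edges is the same. The paper parametrizes by the landing vertex and then verifies it has the form $i + 2 + jb^e$ with $j = b - c_e - 1 \in \{0,\dots,b-1\}$, whereas you start from the index; these are the same computation read in opposite directions.
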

\begin{proof}[Proof of Lemma \ref{Base graph}]
Let $c_e \in \{ 0, 1, 2, \dots, b-1 \}$ and define $x = q(e+1) - c_e b^e$.  Consider the path $P$ whose first edge
is $\{q(e), x \} $, and whose remaining edges are  
$$ \{ \{ x+j, x+j+1 \} :0 \leq j \leq c_eb^e -1 \}.$$
The path $P$ has $1+c_eb^e$ edges.  
The only base chord in the path is $\{ q(e), x \}$. 
To show that this edge is in fact a base chord, note that
$$x = q(e+1) - c_e b^e = b^{e+1}+2(e+1)-c_eb^e = (b^e+2e )+2+(b-c_e-1)b^e$$ and $b-c_e-1 \in \{ 0, 1, \dots , b-1 \}$ since 
$c_e \in \{ 0, 1, \dots , b-1 \}$.
\end{proof}

\bigskip

Note that in the graph $G_b(q(e), e)$, $q(e)$ and $q(e+1)$ are the first and last vertices in this graph respectively. The graphs $G_b (q(e) , e)$ form the building blocks of our construction and they will be put together using the graph union operation. We slightly alter the definition of a graph union for our purposes due to the fact that the graphs $G_b (q(e) , e)$ and $G_b (q(e+1) , e+1)$ share exactly one vertex, namely $q(e+1)$.
If $G_1, \dots , G_r$ is a collection of graphs, then the graph $G_1 \cup G_2 \cup \dots \cup G_r$ is the graph with vertex set 
$$V(G_1) \cup V(G_2) \cup \dots \cup V(G_r)$$ 
and edge set 
$$E(G_1) \cup E(G_2) \cup \dots \cup E(G_r).$$

\begin{definition}[\textbf{2}]
For integers $k \geq 2$ and $b \geq 3$, let $H_b(k)$ be the graph $$H_b(k) = \bigcup_{e=0}^{k-1} G_b(q(e), e).$$
\end{definition}

For example, the graph $H_4(4)$ is the union of the four graphs $G_4(1, 0)$, $G_4(6, 1)$, $G_4(20 , 2)$ and $G_4(70, 3)$. The first three of these graphs are shown 
in Figure 1.  In the figure, all of the base chords are shown, but not all of the vertices and outer edges of $G_4 (6,1)$ and $G_4 (20 , 2)$ are shown.  

\begin{center}
\begin{picture}(420,100)
\put(0,30){\circle*{3}}
\put(30,30){\circle*{3}}
\put(60,30){\circle*{3}}
\put(90,30){\circle*{3}}
\put(120,30){\circle*{3}}
\put(60,90){\circle*{3}}
\put(0,30){\line(1,1){60}}
\put(30,30){\line(1,2){30}}
\put(60,30){\line(0,1){60}}
\put(90,30){\line(-1,2){30}}
\put(120,30){\line(-1,1){60}}
\put(0,30){\line(1,0){120}}
\put(57,95){$1$}
\put(-5,17){$2$}
\put(25,17){$3$}
\put(55,17){$4$}
\put(85,17){$5$}
\put(115,17){$6$}
\put(40,0){$G_4(1,0)$}

\put(150,30){\circle*{3}}
\put(180,30){\circle*{3}}
\put(210,30){\circle*{3}}
\put(240,30){\circle*{3}}
\put(270,30){\circle*{3}}
\put(210,90){\circle*{3}}
\put(150,30){\line(1,1){60}}
\put(180,30){\line(1,2){30}}
\put(210,30){\line(0,1){60}}
\put(240,30){\line(-1,2){30}}
\put(270,30){\line(-1,1){60}}

\put(207,95){$6$}
\put(145,17){$7$}
\put(175,17){$8$}
\put(205,17){$12$}
\put(235,17){$16$}
\put(265,17){$20$}
\put(190,0){$G_4(6,1)$}

\put(150,30){\line(1,0){30}}
\put(180,30){\line(1,0){10}}
\put(210,30){\line(1,0){10}}
\put(210,30){\line(-1,0){10}}
\put(240,30){\line(1,0){10}}
\put(240,30){\line(-1,0){10}}
\put(270,30){\line(-1,0){10}}

\put(300,30){\circle*{3}}
\put(330,30){\circle*{3}}
\put(360,30){\circle*{3}}
\put(390,30){\circle*{3}}
\put(420,30){\circle*{3}}
\put(360,90){\circle*{3}}
\put(300,30){\line(1,1){60}}
\put(330,30){\line(1,2){30}}
\put(360,30){\line(0,1){60}}
\put(390,30){\line(-1,2){30}}
\put(420,30){\line(-1,1){60}}

\put(357,95){$20$}
\put(295,17){$21$}
\put(325,17){$22$}
\put(355,17){$38$}
\put(385,17){$54$}
\put(415,17){$70$}
\put(340,0){$G_4(20,2)$}

\put(300,30){\line(1,0){30}}
\put(330,30){\line(1,0){10}}
\put(360,30){\line(1,0){10}}
\put(360,30){\line(-1,0){10}}
\put(390,30){\line(1,0){10}}
\put(390,30){\line(-1,0){10}}
\put(420,30){\line(-1,0){10}}

\end{picture}

\vspace{.75em}

Figure 1: The graphs $G_4(1,0)$, $G_4 (6,1)$, and $G_4(20 , 2)$.  
\end{center}

Recall that $q(x) = b^x +2x$. For $0 \leq e_1 < e_2 \leq k-1 $, the intersection 
$$V(G_b(q(e_1), e_1)) \cap V(G_b(q(e_2), e_2))$$
is empty unless $e_2 =e_1+1$ in which case the intersection is precisely 
$q(e_1 + 1)$. 
This implies that the graphs 
\[
G_b(q(0), 0), G_b(q(1), 1), G_b(q(2) , 2) ,  \dots , G_b(q(k-1) , k -1)
\]
are edge disjoint.  Therefore, $H_b(k)$ has $b^{k}+2k$ vertices and $b^k +2k -1 +bk$ edges.  An edge of $H_b (k)$ that 
is a base chord of some $G_b ( q(e) , e )$ is called a \textbf{chord edge}.  As 
each $G_b (q(e) , e)$ has $b$ base chords, the graph $H_b (k)$ has $bk$ chord edges.  The edges of 
$H_b (k)$ that are not chord edges form a path from the vertex 1 to the vertex $q(k)$.  
Referring to Figure 1, the chord edges of $H_4 (3)$ are $\{ 1 , i \}$ for $i \in \{3,4,5,6 \}$, $\{6,i \}$ for 
$i \in \{ 8, 12 , 16 , 20 \}$, and $\{ 20 , i \}$ for $i \in \{ 22, 38 , 54 , 70 \}$.  The remaining edges of $H_4 (4)$ form a 
path from the vertex 1 to the vertex $q(3) = 4^3 + 2 \cdot 3 = 70$.  



The following lemma is key to our construction.  Before proving the lemma, let us give a quick example
using the graph $H_4 (4)$.  Suppose we want to find a path of length 52 from 1 to 70 that uses exactly 3 chord edges.  
We first write
$$
52-3 = 49 = 3\cdot 4^2 + 1\cdot 4^1 + 1\cdot 4^0.
$$
Now consider the path from 1 to 70 that uses the base chords $\{1, 5  \}, \{ 6, 16 \}, \{ 20, 22 \}$. We can see that this path contains exactly 49 outer edges and 3 chord edges, and so has length of 52. Notice that we can write these base chords the following way: $\{ 1, 5 \} = \{ q(0), q(1) - 1 \cdot 4^0 \} $, $\{6, 16 \} = \{ q(1), q(2) - 1\cdot 4^1 \}$, and $\{ 20, 22 \} = \{ q(2), q(3) - 3\cdot 4^2 \}$. So then it is the coefficients of $4^e$ that determine which base chord to take.


\begin{lemma}\label{long path}

For each $l \in \{ k-1, k, \dots, k + b^{k-1} - 2 \}$, there exists a path of length $l$ in the graph $H_b(k)$ from the vertex 1 to the vertex $q(k-1)$ that contains exactly $k-1$ chord edges.

\end{lemma}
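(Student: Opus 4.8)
The plan is to build the required path by concatenating, one for each block $G_b(q(e), e)$, a path supplied by Lemma \ref{Base graph}, using the base-$b$ representation of $l$ to decide which path to take in each block. The key observation is that the range of achievable lengths, namely $\{k-1, k, \dots, k+b^{k-1}-2\}$, is exactly what one gets by summing $k-1$ quantities of the form $1 + c_e b^e$ as the digits $c_e$ range independently over $\{0, 1, \dots, b-1\}$.

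First I would reduce the statement to a fact about digits. Given $l \in \{k-1, k, \dots, k+b^{k-1}-2\}$, set $m = l - (k-1)$, so that $m \in \{0, 1, \dots, b^{k-1}-1\}$. Writing $m$ in base $b$ yields coefficients $c_0, c_1, \dots, c_{k-2} \in \{0, 1, \dots, b-1\}$ with $m = \sum_{e=0}^{k-2} c_e b^e$. These are precisely the data that Lemma \ref{Base graph} consumes.

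Next, for each $e \in \{0, 1, \dots, k-2\}$ I would apply Lemma \ref{Base graph} with the coefficient $c_e$ to obtain a path $P_e$ in $G_b(q(e), e)$ from $q(e)$ to $q(e+1)$ of length $1 + c_e b^e$ using exactly one base chord. Concatenating $P_0, P_1, \dots, P_{k-2}$ produces a walk $P$ from $q(0) = 1$ to $q(k-1)$. Since the blocks $G_b(q(e), e)$ are edge disjoint, each base chord used is a distinct chord edge of $H_b(k)$, so $P$ contains exactly $k-1$ chord edges, and its length is
$$\sum_{e=0}^{k-2}\bigl(1 + c_e b^e\bigr) = (k-1) + m = l.$$

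The step requiring the most care, and the main obstacle, is verifying that $P$ is a genuine simple path rather than merely a walk. For this I would use that the vertex set of $G_b(q(e), e)$ is the set of consecutive integers from $q(e)$ to $q(e+1)$, together with the fact that $q(x) = b^x + 2x$ is strictly increasing. Consequently consecutive blocks meet only in the vertex $q(e+1)$, while non-consecutive blocks are vertex disjoint. Because each $P_e$ reaches $q(e+1)$ only at its final vertex, gluing $P_e$ to $P_{e+1}$ at $q(e+1)$ creates no repeated vertex, and the entire concatenation visits each vertex at most once. This confirms that $P$ is a path of length $l$ from $1$ to $q(k-1)$ containing exactly $k-1$ chord edges, completing the proof.
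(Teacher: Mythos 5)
Your proposal is correct and follows essentially the same route as the paper: reduce to the base-$b$ representation of $l-(k-1)$ and concatenate the paths supplied by Lemma \ref{Base graph}, one per block $G_b(q(e),e)$. The only difference is that you explicitly verify the concatenation is a simple path (consecutive blocks meet only at $q(e+1)$), a point the paper leaves implicit.
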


\begin{proof}[Proof of Lemma \ref{long path}]
Let $e \in \{0,1, \dots , k -2 \}$.  
By Lemma \ref{Base graph}, there is a path $P_e$ of length $1+c_eb^e$ in the unique copy of $G_b(q(e), e)$ in $H_b(k)$
where the first vertex of $P_e$ is $q(e)$, the last vertex is $q(e+1)$, and $P_e$ contains exactly one chord edge. 
Therefore, the union $$P_0 \cup P_1 \cup \dots \cup P_{k-2}$$ is a path of length $(k-1) + \sum_{e=0}^{k-2}c_eb^e$ from the vertex $1$ to the vertex $q(k-1)$ that contains exactly $k-1$ chord edges. 

It is worth commenting that we are essentially using representations of integers in base $b$.  
Given $l \in \{ k , k + 1, \dots , k + b^{k -1} - 2 \}$, there are unique 
integers $c_0 , c_1 , \dots , c_{k-2} \in \{0,1, \dots , b-1 \}$ such that 
\[
l - (k-1) = c_0 b^0 + c_1 b^1  + \dots + c_{k-2}b^{k-2}.
\]
In other words, we have written $l - (k-1)$ in base $b$. This last equation implies that 
\[
l = ( 1 + c_0 b^0 ) + (1 + c_1 b^1 ) + \dots + (1  + c_{k-2}b^{k-2} )
\]
and the use of Lemma \ref{Base graph} accounts for each term in this sum.  
\end{proof}
We will require another definition in order to continue the description of the construction. This definition is dependent on the following claim.

\begin{claim}\label{Claim 1}
Let $b$ be a positive integer.  
If $b = \lceil n^{1/k} \rceil$, then we have $n > b^{k-1} + 2k$ and $(b-1)^k < n \leq b^k$.
\end{claim}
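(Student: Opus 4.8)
The second pair of inequalities is immediate from the definition of the ceiling. Since $b = \lceil n^{1/k} \rceil$, we have $b - 1 < n^{1/k} \leq b$, and raising all three positive quantities to the $k$-th power gives $(b-1)^k < n \leq b^k$. So the only real content of the claim is the lower bound $n > b^{k-1} + 2k$.

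For this, the plan is first to translate the standing hypothesis $n \geq (k+2)^k$ into a lower bound on $b$. Taking $k$-th roots gives $n^{1/k} \geq k+2$, and since $k+2$ is an integer, $b = \lceil n^{1/k} \rceil \geq k+2$. I will then reduce the desired bound to a purely algebraic statement about $b$ and $k$: because $(b-1)^k < n$, it suffices to prove
\[
(b-1)^k \geq b^{k-1} + 2k \qquad \text{whenever } b \geq k+2, \; k \geq 2.
\]
Note that a non-strict inequality here already suffices, since the inequality $n > (b-1)^k$ is strict.

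To prove this algebraic inequality I intend to split $b^{k-1} + 2k$ as $b^{k-1} + b^{k-1}$ after checking that $b^{k-1} \geq 2k$; the latter follows from $b \geq k+2$, with the worst case $k=2$, $b=4$ giving the equality $b^{k-1} = 4 = 2k$. It then remains to show $(b-1)^k > 2 b^{k-1}$. Writing $(b-1)^k / b^{k-1} = (b-1)(1 - 1/b)^{k-1}$ and applying Bernoulli's inequality $(1-1/b)^{k-1} \geq 1 - (k-1)/b$, I can bound this ratio below by $(b-1)(1 - (k-1)/b) > b - k \geq 2$. Chaining everything yields $n > (b-1)^k > 2b^{k-1} \geq b^{k-1} + 2k$, as required.

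The main obstacle is the tightness of these estimates near the boundary $b = k+2$: both $b^{k-1} \geq 2k$ and $b - k \geq 2$ hold with equality or near-equality there, so I have to route the argument through strict inequalities (supplied by $n > (b-1)^k$ and by the strict gap $(b-1)(1-1/b)^{k-1} > b-k$, which persists even when Bernoulli is an equality at $k=2$) rather than relying on slack. Everything else is routine once $b \geq k+2$ is in hand.
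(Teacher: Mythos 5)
Your proposal is correct and follows the same overall reduction as the paper: both derive $(b-1)^k < n \leq b^k$ directly from the ceiling, obtain $b \geq k+2$ from the standing hypothesis $n \geq (k+2)^k$, and then reduce the bound $n > b^{k-1}+2k$ to the algebraic inequality $(b-1)^k \geq b^{k-1}+2k$. The one substantive difference is at the final step: the paper merely asserts that $(b-1)^k$ grows faster than $b^{k-1}$ for $b > k$ and checks the boundary case $k=2$, $b=k+2$, whereas you prove the inequality rigorously by splitting off $b^{k-1} \geq 2k$ and establishing $(b-1)^k > 2b^{k-1}$ via Bernoulli's inequality, with $(b-1)\left(1-\tfrac{k-1}{b}\right) = \tfrac{(b-1)(b-k+1)}{b} = (b-k) + \tfrac{k-1}{b} > b-k \geq 2$. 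Your version is a genuine improvement in rigor on a step the paper leaves informal, and your attention to the tight boundary case ($k=2$, $b=4$, where $b^{k-1}=2k$ exactly) is exactly where the care is needed.
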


\begin{proof}[Proof of Claim \ref{Claim 1}]
Since $b = \lceil n^{1/k} \rceil$, we immediately have $(b-1)^k < n \leq b^k$. 
Using $n > (b-1)^k$ we confirm that $n > b^{k-1} + 2k$ by showing that the inequality
\\ $(b-1)^k > b^{k-1} + 2k$ holds. Now $k \geq 2$ and $b = \lceil n^{1/k} \rceil \geq \lceil ((k + 2)^k)^{1/k} \rceil = k+2$, and when $b>k$,
it is known that $(b-1)^k$ grows more quickly than $b^{k-1}$, so it is sufficient to observe that the inequality holds when $k=2$ and $b=k+2$.
\end{proof}    

\begin{definition}[\textbf{3}]
Let $k \geq 2$ and $n \geq (k+2)^k$ be integers.  
Consider $b = \lceil n^{1/k} \rceil$. Add the edges 
\[
\{ 1 , q(k-1) \} ~ \mbox{and} ~ \{1 , n \}
\]
to $H_b(k)$. We then define $F_k(n)$ to be this altered $H_b(k)$ graph induced by the vertices $\{1,2, \dots , n \}$.   

Note that when $k=2$, the edge $\{1 , q(k-1) \}$ is already in $H_b(k-1)$. So when $k=2$ we do not add this chord again. The edge $\{1 , q(k-1) \}$ is called a \textbf{chord edge} and all of the chord edges in $H_b (k)$ are also called \textbf{chord edges} in $F_k (n)$.   
\end{definition}

Observe that the number of chord edges of $F_k (n)$ is at most $k \lceil n^{1/k} \rceil +1$ and 
that $F_k (n)$ contains a cycle of length $n$ whose edges are 
\[
\{1 , n \} \cup \{ \{ i , i + 1 \} : 1 \leq i \leq n - 1 \}.
\]

The number of total chord edges is what we are interested in counting to give an upper bound for $c(n, k)$. The next lemma will show that for all 
\[
l \in \{k , k + 1 , \dots , n - k \},
\]
we can find a cycle of length $l$ in 
$F_k (n)$ that contains exactly $k$ chords.  Therefore, the graph $F_k (n)$ contains almost all of the cycles that 
we need in order to complete the proof of Theorem \ref{Main Theorem}.    

The vertex $q(k-1) = b^{k-1} + 2 ( k - 1)$, 
which is the unique vertex in both 
$G_b ( q(k-2) , k-2)$ and 
$G_b ( q(k-1) , k -1)$, will play a special role so we let 
\[
m = q(k-1) = b^{k - 1} + 2 ( k - 1).
\]  

\begin{lemma}\label{Lemma 1} 
Let $k \geq 2$ and $n \geq (k + 2)^k$.  For each 
$l \in \{k , k + 1 , \dots , n - k \}$, the graph $F_k(n)$ contains a cycle of length $l$ that passes through exactly $k$ chord edges.  
\end{lemma}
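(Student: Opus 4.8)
The plan is to exhibit, for every target length $l$, an explicit cycle through exactly $k$ chord edges, splitting $\{k,\dots,n-k\}$ into a low range and a high range treated by two different families. The one observation to keep central is that the edge $\{1,n\}$ is an edge of the underlying $C_n$ and is \emph{not} a chord edge of $F_k(n)$ (only the base chords and $\{1,q(k-1)\}$ count as chords, per Definition 3), so it may be used freely to ``wrap around'' without spending any of the chord budget.

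For the low range $l\in\{k,\dots,k+b^{k-1}-1\}$ I would invoke Lemma~\ref{long path}: it produces a path $P$ from $1$ to $m=q(k-1)$ of any prescribed length in $\{k-1,\dots,k+b^{k-1}-2\}$ using exactly $k-1$ chord edges. Closing $P$ with the chord $\{1,q(k-1)\}$ yields a cycle of length $l$ with exactly $k$ chords. (The only degenerate point is $k=2,\ l=2$, where $\{1,m\}$ would have to be reused; this is the usual short-cycle boundary.)

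For the high range I would build a cycle that threads one base chord through every block $G_b(q(e),e)$, $e=0,\dots,k-1$, and then uses $\{1,n\}$ to return. Concretely, traverse each block $e\le k-2$ from $q(e)$ to $q(e+1)$ by a chord-plus-outer path of length $1+c_eb^e$ (Lemma~\ref{Base graph}), traverse block $k-1$ from $q(k-1)$ by a base chord $\{m,\,m+2+jb^{k-1}\}$ followed by outer edges up to $n$, and close with $\{1,n\}$. This uses exactly $k$ chords, and a direct count gives length
\[
l=(k-1)+\sum_{e=0}^{k-2}c_eb^e+(n-m)-j\,b^{k-1}.
\]
The digits $c_0,\dots,c_{k-2}\in\{0,\dots,b-1\}$ give fine control: for fixed $j$ their weighted sum sweeps every value in $\{0,\dots,b^{k-1}-1\}$, so $l$ runs over a block of $b^{k-1}$ consecutive integers, while increasing $j$ by one shifts this block down by exactly $b^{k-1}$. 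Hence the windows tile without gaps, the $j=0$ window ending exactly at $n-k$.

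The crux, and the step I expect to require the most care, is checking that the two families overlap, i.e.\ that the high family descends far enough to meet the top $k+b^{k-1}-1$ of the low family. This comes down to taking $j$ as large as $j_{\max}=\lfloor (n-m-2)/b^{k-1}\rfloor$, so that the smallest attainable length is at most $k+b^{k-1}$; the bound $j_{\max}b^{k-1}>n-m-2-b^{k-1}$ is immediate from the floor and yields exactly this. I would use Claim~\ref{Claim 1} ($b^{k-1}+2k<n\le b^k$) on both sides: the lower bound guarantees $j_{\max}\ge 0$ so the high family is nonempty, and $n\le b^k$ guarantees $j_{\max}\le b-1$, so that every $\{m,\,m+2+jb^{k-1}\}$ is genuinely a base chord of $G_b(q(k-1),k-1)$ with endpoint $\le n$. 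A final routine check confirms each constructed closed walk is a simple cycle: consecutive blocks meet only at the shared vertices $q(e+1)$, each visited once, and the block-$(k-1)$ tail from $m+2+jb^{k-1}$ to $n$, together with $\{1,n\}$, is disjoint from the portion lying in $[1,m]$.
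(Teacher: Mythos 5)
Your proposal is correct and follows essentially the same route as the paper: the low range $\{k,\dots,k+b^{k-1}-1\}$ is covered by closing the Lemma~\ref{long path} paths with the chord $\{1,q(k-1)\}$, and the high range by cycles $P(c_0,\dots,c_{k-2})\cup Q(j)$ through one base chord per block plus the non-chord edge $\{1,n\}$, with your $j_{\max}=\lfloor (n-m-2)/b^{k-1}\rfloor$ being exactly the paper's $\alpha$ and your overlap inequality matching the paper's Claim~\ref{L1 and L2 claim}.
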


\begin{proof}[Proof of Lemma \ref{Lemma 1}]
We will prove Lemma \ref{Lemma 1} by establishing several claims.  First we give a quick outline of the proof.  


\begin{enumerate}

\item Using Lemma \ref{long path}, find paths from the vertex 1 to $q(k-1)$ that have exactly $k -1$ chord 
edges.
\item Find cycles whose last chord edge is $\{ m, 1\}$.  This 
will be Claim 7 and we will use the paths from part 1 of this lemma.  We will write $L_1$ for the 
cycle lengths we have found here.    
\item Find paths from the vertex $m$ to 1 that use exactly one chord edge in 
$G_b(q(k-1), k-1)$. This will be Claim 8.  
\item Find the cycles by combining the paths from parts 1 and 3 of this lemma.  This will be Claim 9 and we write 
$L_2$ for the cycle lengths we have found here.   
\item Show that parts 2 and 4 of this lemma combined cover all cycle lengths $l \in \{k , k + 1 , \dots , n - k \}$, i.e., 
\[
\{ k, k+1, \dots , n-k \} \subseteq L_1 \cup L_2.
\]
This will be Claim 10.

\end{enumerate}

\bigskip

By Definition 3, $F_k(n)$ contains exactly one complete copy of each of the graphs 
\[
G_b(q(0), 0), G_b(q(1), 1), G_b (q(2) , 2 ) , \dots , G_b(q(k-2), k-2).
\]

By Lemma \ref{long path}, for any $c_0, c_1, \dots , c_{k-2} \in \{ 0, 1, \dots , b-1 \}$, 
there is a path, which we will denote by $P(c_0, c_1, \dots , c_{k-2})$, that has the following properties.
\begin{enumerate}
\item The first vertex is 1 and the last vertex is $m$.
\item The length of $P(c_0, c_1, \dots , c_{k-2})$ is $k-1 + \sum_{e=0}^{k-2}c_eb^e$.
\item The edges of $P(c_0, c_1, \dots , c_{k-2})$ are edges in the graphs
\[
G_b(q(0), 0), G_b(q(1), 1), G_b(q(2) , 2),  \dots , G_b(q(k-2), k-2).
\]
\item The path $P(c_0, c_1, \dots , c_{k-2})$ contains exactly $k-1$ chord edges of $F_k (n)$.
\end{enumerate}

We will now find paths in $F_k(n)$ from the vertex $m$ to the vertex 1 that use edges from 
$G_b(q(k-1), k-1)$, and either $\{ n, 1 \}$ or $\{ m, 1\}$.  Each of these paths will also contain exactly one chord edge.

\begin{claim}\label{claim L1}
For every $l \in \{ k, k+1, \dots , k+ b^{k-1}-1 \}$ the graph $F_k(n)$ has a cycle of length $l$ that contains exactly $k$ chord edges.
\end{claim}

\begin{proof}[Proof of Claim \ref{claim L1}]
By definition of $F_k(n)$, the edge $\{ m, 1 \}$ is a chord edge. Therefore,
given $c_0, c_1, \dots , c_{k-2} \in \{ 0, 1, \dots , b-1 \}$, the union
$$P(c_0, c_1, \dots , c_{k-2}) \cup \{ m, 1 \}$$
is a cycle of length $k + \sum_{e=0}^{k-2}c_eb^e$ that contains exactly $k$ chord edges.
Claim \ref{claim L1} now follows from the fact that every integer in the set $\{ 0, 1, \dots , b^{k-1}-1 \}$ can be written in the form $\sum_{e=0}^{k-2}c_eb^e$ for some $c_0, c_1 , \dots , c_{k - 2} \in \{ 0, 1, \dots , b-1 \}$. 
\end{proof}
Let 
\[
L_1 = \left\{ k + \sum_{e  = 0}^{k -2} c_e b^e : c_e \in \{ 0 ,1, \dots , b - 1 \} \right\} =
\{ k, k+1, \dots , k+b^{k-1}-1 \}.
\]

Since $V(G_b(m, k-1)) = \{ m, m+1, m + 2, \dots , q(k) \}$, 
by Claim 5 the graph $F_k(n)$ contains a non-trivial induced subgraph of $G_b(m, k-1)$. 
Now we will define an integer $\alpha$ that counts the number of base chords in $G_b(m, k-1)$ 
that are chord edges in $F_k(n)$. Let $\alpha$ be the unique integer in the set $\{ 0, 1, \dots , b-2 \}$ that satisfies 
\begin{equation}\label{def of alpha}
 m + 2 + \alpha b^{k-1} \leq n <  m + 2 + (\alpha + 1)b^{k-1}.
\end{equation}
Such an $\alpha$ exists by Claim \ref{Claim 1}. The base chords in $G_b(m, k-1)$ that are chords in $F_k(n)$ are precisely those edges in the set 
$$ \{ \{ m, m + 2 + jb^{k-1} \} : j \in \{ 0, 1, \dots , \alpha \} \}. $$

\begin{claim}\label{most paths}
For any $j \in \{ 0, 1, \dots , \alpha \}$, there is a path $Q(j)$ in $F_k(n)$ with the following properties.
\begin{enumerate}
\item The first vertex is $m$ and the last vertex is 1.
\item The length of $Q(j)$ is $n + 2 - (j+1)b^{k-1} - 2k.$
\item All of the edges of $Q(j)$, with the exception of $\{ n, 1 \}$, are edges in the partial copy of
$G_b(m, k-1)$ in $F_k(n)$. 
\item The path $Q(j)$ contains exactly one chord edge.
\end{enumerate}
\end{claim}

\begin{proof}[Proof of Claim \ref{most paths}]
Let $j \in \{0, 1, \dots , \alpha \}$ and let $Q(j)$ be the path 
\[
m , m + 2 + j b^{k - 1} , m + 2 + jb^{k-1} + 1, m + 2 + jb^{k-1} + 2, \dots , n-1 , n, 1.
\]
Clearly the first property holds for $Q(j)$. The length of $Q(j)$ is 
$$ 1 + (n + 1) - (m + 2 + jb^{k-1}) = 
n+2 - (b^{k-1} + 2(k-1) + 2 + jb^{k-1}).$$
This shows that the second property holds.  The third property follows from the definition of the graphs $G_b(i, e)$. 
Finally, the fourth property follows from the fact that the first edge of $Q(j)$, which is $\{ m, m + 2 + jb^{k-1} \}$, is the only chord edge in $Q(j)$.
\end{proof}

\bigskip

Given $c_0, c_1, \dots , c_{k-2} \in \{ 0, 1 , \dots , b-1 \}$ and $j \in \{  0, 1, \dots , \alpha \}$, the union $$P(c_0, c_1, \dots , c_{k-2}) \cup Q(j)$$ is a cycle in $F_k (n)$ of length
$$ k-1 + \sum_{e=0}^{k-2}c_eb^e + n + 2 -(j+1)b^{k-1}-2k$$
that contains exactly $k$ chords. This expression can be rewritten as
$$n + 1 - k - (j+1)b^{k-1} + \sum_{e=0}^{k-2}c_eb^e.$$
Let $$L_2 = \{ n + 1 - k - (j+1)b^{k-1} + \sum_{e=0}^{k-2}c_eb^e : j \in \{  0, 1, \dots , \alpha \} , c_e \in \{ 0, 1 , \dots , b-1 \} \}.$$

\begin{claim}\label{L2 claim}
If $L_2$ is defined as above then

\begin{center}

\[
\left\{ n-k+1 - (\alpha+1)b^{k-1}, n-k+1 - (\alpha+1)b^{k-1}+1, n-k+1 - (\alpha+1)b^{k-1}+2, \dots , n-k \right\} \subseteq L_2 . 
\]
\end{center}
\end{claim}

\begin{proof}[Proof of Claim \ref{L2 claim}]
It is easy to check that the smallest integer in $L_2$ is $$n-k+1 - (\alpha+1)b^{k-1} $$
and the largest integer in $L_2$ is $$n-k+ b^{k-1} - b^{k-1} = n-k. $$
We will now show that $L_2$ contains every integer between $n-k+1 - (\alpha+1)b^{k-1}$ and $n-k$.

As the $c_e$'s range over $\{0,1, \dots , b - 1 \}$, the sum $\sum_{e = 0}^{k - 2} c_e b^e$ ranges over all integers in the set $\{0 , 1, \dots , b^{k-1}-1 \}$ as shown in Lemma 4.  Thus, for any fixed $j \in \{0,1, \dots , \alpha \}$, 
the set 

\[
I_j = \{ n-k+1 - ( j + 1)b^{ k - 1} + \sum_{e = 0}^{ k - 2} c_e b^e : c_e \in \{0 ,1, \dots , b-1 \} \}
\]
is the interval 
\[
\{ n-k+1 - ( j +1) b^{ k - 1} , n-k+1 - ( j + 1)b^{ k - 1} + 1 , \dots 
, n-k+1 - j b^{k-1} - 1 \}.
\]

Note that for $0 < j \leq \alpha$, the largest integer in $I_j$ is $n-k+1 - j b^{k - 1}-1$ 
and the smallest integer in $I_{j +1}$ is $n-k+1 - jb^{k-1}$.  These two integers are consecutive and so the union 
\[ 
I_0 \cup I_1 \cup \dots \cup I_{ \alpha }
\]
contains all integers from $n-k+1 - ( \alpha + 1) b^{k - 1}$ to $n-k+1 - 0 \cdot b^{k -1} - 1 = n - k$.  This completes the proof 
of Claim \ref{L2 claim}.       
\end{proof}

\begin{claim}\label{L1 and L2 claim}
We have 
\[
\{ k, k+1, \dots , n-k \} \subseteq L_1 \cup L_2.
\]
\end{claim}

\begin{proof}[Proof of Claim \ref{L1 and L2 claim}]
To prove Claim \ref{L1 and L2 claim}, it is enough to show that 
\begin{equation}\label{ineq on n 2}
n + 1 - k - ( \alpha + 1) b^{k -1} - 1 \leq k  + b^{k-1} - 1
\end{equation}
since the largest integer in $L_1$ is $k+b^{k - 1} - 1$, and the smallest integer in $L_2$ is 
$n +1 - k - ( \alpha + 1) b^{ k - 1}$.  
The inequality (\ref{ineq on n 2}) 
is equivalent to 
\begin{equation}\label{ineq on n}
n+1 \leq ( \alpha + 2) b^{k - 1} + 2k.
\end{equation}
Recalling the definition of $\alpha$ given in (\ref{def of alpha}), 
we have that 
$n < ( \alpha  +2 ) b^{ k - 1} + 2k$.  Now since $n$ and $( \alpha + 2) b^{k - 1}+2k$ are integers, this last inequality 
implies (\ref{ineq on n}).  
\end{proof}

\bigskip

Combining Claim \ref{L1 and L2 claim} together with the fact that for each 
$l \in L_1 \cup L_2$, $F_k (n)$ contains a cycle of length $l$ with exactly $k$ chord edges completes the proof of Lemma 
\ref{Lemma 1}.  
\end{proof}

\bigskip

The final step is a simple argument that adds no more than $k^2$ chord edges to $F_k (n)$ to account for the 
cycle lengths in the set $\{ n - k + 1 , n - k + 2 , \dots , n \}$. 

\begin{lemma}
We can add no more than $k^2$ chord edges to $F_k(n)$ in such a away that there are cycles of length $l \in \{ n-k+1, n-k+2, \dots, n\}$ that contain these chords.

\end{lemma}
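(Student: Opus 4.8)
The plan is to build the $k$ missing lengths $n-k+1, n-k+2, \dots, n$ one at a time, using a single elementary principle about the outer cycle $1, 2, \dots, n, 1$ of $F_k(n)$. Suppose we delete a collection of outer edges so that the vertices we keep fall into exactly $a$ maximal \emph{arcs} (maximal runs of consecutive vertices joined by the retained outer edges), and we then add $a$ new chords linking these arcs cyclically into a single cycle. The resulting cycle uses only outer edges and the $a$ new chords, so it contains exactly $a$ chords, and its length equals the number of vertices it covers. Hence, to produce a cycle of length $n-j$ with exactly $k$ chords, it suffices to omit $j$ vertices and arrange the remaining $n-j$ vertices into exactly $k$ arcs, and then reconnect those arcs with $k$ chords. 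A pleasant feature of this viewpoint is that the chord count is the arc count, so it can be made equal to $k$ for every parity of $k$; no separate case analysis is needed, and in particular the length-$n$ case (which cannot be realized by a single monotone path when $k$ is odd) is handled uniformly.

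For each $j \in \{0, 1, \dots, k-1\}$ I would confine all of the surgery to a block $B_j$ of $O(k)$ consecutive vertices near the start of the cycle, choosing the blocks $B_j$ disjoint for distinct $j$. Outside $B_j$ the vertices (together with a wrap edge) are left as one long arc; inside $B_j$ we omit exactly $j$ vertices and introduce exactly $k-1$ additional breaks, so that in total the kept vertices form exactly $k$ arcs. We then add $k$ new chords joining these $k$ arcs cyclically into one cycle. By the principle above this cycle has exactly $k$ chords and length $n-j$. Carrying this out for each of the $k$ values of $j$ uses a disjoint set of at most $k$ new chords per value, hence at most $k^2$ new chords altogether; adding all of them to $F_k(n)$ cannot destroy the cycles supplied by Lemma~\ref{Lemma 1}, since each new cycle we build uses only outer edges and its own $k$ newly added chords.

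The one point that genuinely needs care, and which I expect to be the main obstacle, is ensuring that the $k$ reconnecting edges are honest chords rather than outer edges (an outer reconnection would merge two arcs and drop the chord count below $k$) and that they close up into a \emph{single} cycle rather than a disjoint union of shorter ones. I would resolve this by spacing the breakpoints and the omitted vertices far apart inside $B_j$ and by choosing the cyclic order in which the arcs are linked so that every reconnecting edge joins two non-adjacent arc endpoints. This is exactly where the hypothesis $n \geq (k+2)^k$ is used: with $n$ enormous relative to $k$ there is ample room to place the $O(k)$ breaks and links so that all reconnecting edges are non-adjacent and a single cycle results. Once a fixed such pattern of breaks, omissions, and links is written down for each $j$, checking that it yields one cycle of length $n-j$ with exactly $k$ chords is a routine finite verification.
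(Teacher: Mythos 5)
Your proposal is correct and follows the same strategy as the paper: for each of the $k$ missing lengths $n-j$, $j \in \{0,1,\dots,k-1\}$, add a fresh, disjoint batch of at most $k$ chords realizing a cycle of that length containing exactly $k$ chords, giving at most $k^2$ added chords in total, none of which disturbs the cycles already guaranteed by Lemma~\ref{Lemma 1}. In fact your arc-decomposition mechanism (split the kept vertices into $k$ arcs and reconnect them cyclically by $k$ genuine chords, taking care that no reconnecting edge is an outer edge and that a single cycle results) supplies the concrete construction that the paper's proof only asserts as known, so your write-up is, if anything, the more complete of the two.
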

\begin{proof}[Proof of Lemma 11]

Given $k$ chord edges, it is know that we can place these chord edges in a way that creates a unique cycle of any length $l \in \{k, k+1, \dots, n \}$ that contains these chords. Note that in the construction of $F_k(n)$, we are always missing the last $k$ cycles, namely $l \in \{ n-k , n-k+1, \dots, n\}$, no matter the size of $n$. This implies that we can add $k$ chord edges to create a cycle of length $l$ for each $l \in \{ n-k , n-k+1, \dots, n\}$. This would add at most $k^2$ chord edges to $F_k(n)$, and would capture all missing cycles in the original $F_k(n)$ construction. Using a very specific and more complicated construction, it is possible to reduce the amount of chord edges needed to capture these cycles from $k^2$ to $k$, however the strength of the result is not affected by the growth of $n$ in either case. 
\end{proof}

Thus, by Lemma 6 and Lemma 11 $F_k(n)$ has no more than $k\lceil n^{1/k} \rceil + k^2$ chords and contains all cycles of length $l \in \{ k, k+1, \dots, n  \}$.

\section{Concluding Remarks}

We believe that our upper bound is asymptotically best possible and that the coefficient of $n^{1/k}$ in the bound $c(n , k) \leq k \lceil n^{1/k} \rceil + k^2$ is correct.

Affif Chaouche et.\ al.\ ask if $c(n , k)$ is monotone in $n$. We believe that $c(n, k)$ is monotone in $n$ for all $n > (k+2)^k$. Establishing monotonicity for these types of problems seems difficult. For example, Griffin \cite{g} has conjectured that the function $m(n)$, defined in the introduction, satisfies $m(n) \leq m(n+1)$ for all $n \geq 3$, but to our knowledge, this is still open.

\section{Acknowledgments}

I want to thank Craig Timmons and Mike Tait for their valuable comments, input, and assistance during the writing of this paper.


\end{document}